\newtheorem{thm}{Theorem}[section]
 \newtheorem{corol}[thm]{Corollary}
 \newtheorem{prop}[thm]{Proposition}
 \newtheorem{defn}[thm]{Definition}
 \newtheorem{rem}[thm]{Remark}
 \newtheorem{examp}[thm]{Example}
 \numberwithin{equation}{section}
\theoremstyle{remark}
\numberwithin{equation}{section}
\begin{document}
\setcounter{page}{1}

\title[STABLE NORMED ALGEBRAS]{stable normed algebras}

\author[E. Ansari, S. Nouri]{E. Ansari Piri$^{*}$, S. Nouri }

\address{Department of Pure Mathematics,   Faculty of
Mathematical Science,
  University of Guilan,   Rasht,   Iran.}
\email{\textcolor[rgb]{0.00,0.00,0.84}{eansaripiri@gmail.com; solmaznouri85@gmail.com}}

\keywords{stable normed algebras, almost multipliers, faithful Banach algebra,
 bounded approximate identity.}

\date{Received: xxxxxx; Revised: yyyyyy; Accepted: zzzzzz.
\newline \indent $^{*}$ Corresponding author
\newline \indent 2010 AMS Mathematics Subject Classification: 46H99}

\begin{abstract} In this paper we introduce a new property for normed
algebras. This property which we call it stability, plays a key role in the studying
of the theory of almost multiplier maps. In this note we study some of
the basic properties of this notion and characterize stable algebras among similar algebras.
\end{abstract}

 \maketitle

\section{Introduction and preliminaries}

There are many papers discussing on the stability of maps
on normed algebras. In this note we do not discuss on the
stability of such maps on a normed algebra but, we introduce a
special property for a class of normed algebras which we call stability on normed
algebras and these algebras are called stable normed algebras.
The general theory of multipliers on a faithful Banach
algebra was introduced by Wang in \cite{W} and Birtal in \cite{Bir}.
Nearly all of the results about multiplier maps are proved for faithful normed
algebras. In \cite{A-N}, we have introduced the almost multiplier maps and investigate
some properties of these maps for stable normed algebras instead of faithful normed algebras.

In section \ref{section2} of this note, we define stable normed algebras and compare this group of algebras
with the faithful algebras and the normed algebras with an approximate identity. In section \ref{section3},
we prove a result for stable normed algebras which is in fact an equivalent definition
of stable normed algebras and also we define the notion of stable normed algebras on the modules.

In section \ref{section4}, we close the paper by introducing some questions which we believe to be open.

We recall that a normed algebra $A$ is left (resp. right)
faithful if for all $x\in A, ~ xA=0 ~(resp.~Ax=0)$, implies $x=0$.
The normed algebra $A$ is faithful if it is left and right
faithful. Let $A$ be a normed algebra. A net $( e_{\lambda})$ in
$A$ is called a left ( resp. right, two-sided ) approximate
identity, if for all
 $x\in A$, $\lim_{\lambda \in \Lambda} e_{\lambda} x= x$,
  (resp. $\lim_{\lambda \in \Lambda} x e_{\lambda} = x$,
  $\lim_{\lambda \in \Lambda} e_{\lambda} x= x=\lim_{\lambda \in \Lambda} x e_{\lambda}$).

 \vspace{0.5 cm}
\section{characterizing stable normed algebras}\label{section2}

In this section we introduce the notion of stable normed algebra
and compare it with similar algebras. In fact we prove that every
stable normed algebra is faithful and give an example of a
faithful Banach algebra which is not stable. Also we compare
these algebras with the algebras having an approximate identity.

 \vspace{0.5 cm}
\begin{defn}
We say that the normed algebra $A$ is a left (resp. right) stable
normed algebra if for all $a\in A$ and $M>0$; if we have
$\|ab\|\leqslant M \|b\|~$ $( resp. ~\|ba\|\leqslant M \|b\|~)$
for all $b\in A$, then we can conclude $\|a\|\leqslant M$. The
normed algebra $A$ is stable if it is both left and right stable.
\end{defn}

\begin{prop}
Every stable normed algebra $A$ is faithful.
\end{prop}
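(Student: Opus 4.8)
The plan is to prove separately that left stability implies left faithfulness and right stability implies right faithfulness; since a stable algebra is by definition both left and right stable, combining the two halves yields that $A$ is faithful. The crucial observation is that a vanishing product automatically satisfies the stability hypothesis for \emph{every} positive constant $M$, so that stability pins the norm of the element below every positive number.

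First I would settle the left case. Suppose $x \in A$ satisfies $xA = 0$, that is, $xb = 0$ for all $b \in A$. Then for every fixed $M > 0$ we have $\|xb\| = 0 \leqslant M\|b\|$ for all $b \in A$, so the hypothesis in the definition of left stability is met with $a = x$ and this particular $M$. Applying left stability therefore gives $\|x\| \leqslant M$. As $M > 0$ was arbitrary, letting $M \to 0^{+}$ forces $\|x\| = 0$, hence $x = 0$. This shows that $A$ is left faithful.

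The right case is entirely symmetric. If $Ax = 0$, then $bx = 0$ for all $b \in A$, so $\|bx\| = 0 \leqslant M\|b\|$ holds for every $b$ and every $M > 0$; right stability applied with $a = x$ then yields $\|x\| \leqslant M$ for all $M > 0$, whence $x = 0$. Thus $A$ is right faithful as well, and being both left and right faithful, it is faithful.

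I do not expect a genuine obstacle in this argument, since it is essentially a direct unwinding of the two definitions. The only point that deserves a measure of care is that the stability condition is quantified over strictly positive $M$: one cannot simply substitute $M = 0$, but must instead use that the conclusion $\|x\| \leqslant M$ holds for all $M > 0$ and then pass to the infimum to deduce $\|x\| = 0$.
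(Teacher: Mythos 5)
Your proposal is correct and follows essentially the same argument as the paper: for $x$ with $xA=0$ one has $\|xb\|=0\leqslant M\|b\|$ for every $M>0$, so stability forces $\|x\|\leqslant M$ for all $M>0$ and hence $x=0$. The only difference is cosmetic --- you write out the right-sided case explicitly, while the paper treats one side and leaves the symmetric case implicit.
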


\begin{proof} For $a\in A$, let $aA=0$ and $\varepsilon >0$. For
all $b\in A$ we have $\|ab\|=0\leqslant \varepsilon \|b\|$ and so
$\|a\|\leqslant \varepsilon$, which implies $a=0$.
\end{proof}
 \vspace{0.5 cm}
Now obviously, all theorems and results which hold for faithful normed
algebras also valid for stable normed algebras.

Here we give an example to show that the class of stable normed
algebras are essentially different from the class of faithful
algebras.

\begin{examp}
Suppose $A=( {\ell}^1 (\Bbb N), \|.\|_1)$ with the usual
definition of operations (pointwise). Then $A$ is a faithful
Banach algebra which is not stable. For if define
$a_n=\frac{1}{2^n}$, then $(a_n)\in A$, and for all $(b_n) \in A$
we have $\|(a_n). (b_n) \|\leq \frac{1}{2} \|(b_n)\|$, but
$\|(a_n)\|=1 > \frac{1}{2}$.
\end{examp}

\vspace{0.5cm}
 Finally, we give an example of a normed algebra
witch is neither a faithful nor a stable algebra.

\begin{examp}
The matrix algebra $M_{2\times 2}(\Bbb C)$ is an stable normed
algebra, but it's subalgebra $A=\{ \begin{pmatrix} a&b\\0&0
\end{pmatrix}: a,b \in \Bbb C\}$, with the usual matrix multiplication and
norm $\|\begin{pmatrix} a&b\\0&0
\end{pmatrix}\|=max \{|a|, |b| \}$, is a Banach algebra which is not stable and faithful.
Since

$\|\begin{pmatrix} a&b\\0&0
\end{pmatrix}~\begin{pmatrix}
c&d\\0&0
\end{pmatrix}\|=\|\begin{pmatrix}
ac&ad\\0&0
\end{pmatrix}\|=|a|~max\{|c|,|d|\}= |a|~\|\begin{pmatrix}
c&d\\0&0
\end{pmatrix}\|$
But it is not necessarily true that, $\|\begin{pmatrix} a&b\\0&0
\end{pmatrix}\|\leq |a|$.

Moreover, since $\begin{pmatrix} a&b\\0&0
\end{pmatrix} \begin{pmatrix}
c&d\\0&0
\end{pmatrix}=\begin{pmatrix}
ac&ad\\0&0
\end{pmatrix}$, if $\begin{pmatrix} a&b\\0&0
\end{pmatrix} A=0$
it is not necessarily true that, $\begin{pmatrix} a&b\\0&0
\end{pmatrix}=0$.

If we define $\|\begin{pmatrix} a&b\\0&0
\end{pmatrix}\|= \sqrt{|a|^2 + |b|^2}$ or $\|\begin{pmatrix}
a&b\\0&0
\end{pmatrix}\|=|a|+|b|$ on $A$. Then $A$ is not
stable yet.
\end{examp}

\vspace{0.5cm}
\begin{rem}
The above example also shows that a subalgebra of an stable
normed algebra is not necessarily stable.
\end{rem}

 \vspace{0.5 cm}
It is easy to see that if $A$ has a bounded approximate identity, then it is faithful.
Here we verify the relation between a stable normed algebra and the algebra with a bounded
approximate identity.

\begin{prop}\label{prop1}
Every normed algebra with a bounded left (resp. right)
approximate identity with bound 1, is a left (resp. right) stable
normed algebra.
\end{prop}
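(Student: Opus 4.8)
The plan is to test the left-stability inequality against the approximate identity itself and then pass to the limit. Fix $a \in A$ and $M > 0$, and assume $\|ab\| \le M\|b\|$ for every $b \in A$; let $(e_\lambda)_{\lambda \in \Lambda}$ be a left approximate identity for $A$ with $\|e_\lambda\| \le 1$ for all $\lambda$. The first step is simply to substitute $b = e_\lambda$ into the standing hypothesis, which yields
\[
\|a e_\lambda\| \le M \|e_\lambda\| \le M
\]
for every index $\lambda$. In other words, the entire net $(a e_\lambda)_\lambda$ is trapped in the closed ball of radius $M$ about the origin, and this bound is uniform precisely because the approximate identity has bound $1$.

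The second step is to identify the limit of this net. Invoking the approximate-identity property together with the continuity of the norm, I would argue that $a e_\lambda \to a$ and hence conclude
\[
\|a\| = \lim_\lambda \|a e_\lambda\| \le M .
\]
Since $a$ and $M$ were arbitrary subject to the inequality, this is exactly the assertion that $A$ is left stable. The right-stable statement is then handled symmetrically: starting from a right approximate identity and the hypothesis $\|ba\| \le M\|b\|$, one substitutes $b = e_\lambda$ to obtain $\|e_\lambda a\| \le M$, and lets the approximate identity carry $e_\lambda a$ back to $a$ to read off $\|a\| \le M$.

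I expect the only genuine obstacle to be the limiting step, and it must be executed with care on two points. First, the uniform bound $\|e_\lambda\| \le 1$ cannot be relaxed: with a general bounded approximate identity of bound $C$ the identical computation would only deliver $\|a\| \le C M$, so the sharp constant $1$ is genuinely used, and it is what makes the conclusion $\|a\| \le M$ come out with no loss. Second, one has to be sure that the approximate identity reproduces $a$ on the very side on which it was inserted into the product — that is, that it is $a e_\lambda$ that converges to $a$ in the left-stable case and $e_\lambda a$ in the right-stable case — so the point requiring verification is the matching of the side of the approximate identity to the side of the stability condition. Once that convergence is secured, the remainder is the routine norm-continuity estimate displayed above.
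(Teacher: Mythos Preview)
Your proposal is correct and essentially identical to the paper's proof: substitute $b=e_\lambda$, use $\|e_\lambda\|\le 1$ to obtain $\|ae_\lambda\|\le M$, and then pass to the limit to conclude $\|a\|\le M$. The paper is terser---it writes only ``therefore $\|a\|\leqslant M$'' for the limiting step---and, exactly as you anticipated, glosses over the left/right matching issue you flagged (it uses $ae_\lambda\to a$ while calling $(e_\lambda)$ a \emph{left} approximate identity).
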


\begin{proof} Suppose $A$ is a normed algebra and let $(e_\alpha)_{\alpha \in I}$ be a bounded left
approximate identity of A with bound 1. For $a\in A$ and $M>0$ if we
 have $\|ab\|\leqslant M \|b\|$, for all $b\in A$, then
  for all $\alpha \in I$, $\|a e_{\alpha}\|\leqslant M
\|e_{\alpha}\|\leqslant M$ and therefore $\|a\|\leqslant M$.
\end{proof}
\begin{corol}\label{corol1}
Every unital normed algebra is stable.
\end{corol}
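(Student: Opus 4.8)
The plan is to deduce this directly from Proposition \ref{prop1} rather than returning to the definition of stability. The key observation is that in a unital normed algebra the identity element $e$ supplies, in the most economical way possible, a bounded approximate identity: the constant net each of whose terms equals $e$ satisfies $e\cdot x = x = x\cdot e$ for every $x\in A$, so it is simultaneously a left, a right, and a two-sided approximate identity — indeed an exact one, with the defining limits being genuine equalities.

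First I would record the normalization $\|e\|=1$, which is part of the standing convention for unital normed algebras and is precisely the bound demanded by Proposition \ref{prop1}. With this in hand, the constant net $(e)$ is a bounded left approximate identity with bound $1$ and, equally, a bounded right approximate identity with bound $1$. Applying the left half of Proposition \ref{prop1} then yields that $A$ is left stable, and applying the right half yields that $A$ is right stable; combining the two gives that $A$ is stable, which is the assertion. Unravelled, the argument is nothing more than substituting $b=e$ into $\|ab\|\leqslant M\|b\|$ to obtain $\|a\|=\|ae\|\leqslant M\|e\|=M$, and symmetrically on the other side.

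The only point that demands care — and which I regard as the main (minor) obstacle — is the normalization of the identity. If one does not assume $\|e\|=1$, then substituting $b=e$ gives only $\|a\|=\|ae\|\leqslant M\|e\|$, which falls short of $\|a\|\leqslant M$; equivalently, the constant net $(e)$ would be an approximate identity with bound $\|e\|$ rather than bound $1$, and Proposition \ref{prop1} would not apply verbatim. Since in any unital normed algebra one has $\|e\|=\|e^2\|\leqslant\|e\|^2$ and hence $\|e\|\geqslant 1$, the normalization $\|e\|=1$ is the natural convention and costs nothing beyond possibly passing to an equivalent algebra norm; under it the difficulty disappears and the corollary is immediate.
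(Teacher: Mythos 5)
Your proof is correct and coincides with the paper's approach: the corollary is stated there without proof, precisely as an immediate application of Proposition \ref{prop1} to the constant net $(e)$, which is a two-sided approximate identity of bound $\|e\|=1$ under the standard convention for unital normed algebras. Your caveat about the normalization is well taken, since without $\|e\|=1$ the statement genuinely fails --- for example $\mathbb{C}$ with the algebra norm $\|z\|=2|z|$ is unital but not stable, as $\|ab\|\leqslant M\|b\|$ for all $b$ yields only $\|a\|\leqslant 2M$ (equivalently, $\|L_e\|=1<\|e\|$, violating the criterion of Theorem \ref{thm5}).
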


\begin{prop}
Let $(A,\|.\|)$ be a commutative normed algebra with a bounded sequential approximate
identity. Then there is an equivalent algebra norm $|.|$ on $A$ such that, $(A,|.|)$ is
stable.
\end{prop}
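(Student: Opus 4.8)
The plan is to construct the equivalent norm directly from the left multiplication operators and the approximate identity, exploiting commutativity so that one computation handles both the left and right stability conditions simultaneously.
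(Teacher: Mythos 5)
There is a genuine gap, and it sits exactly at the hard point of the proposition. Your plan amounts to taking the regular-representation norm $|a|=\|L_a\|=\sup\{\|ab\|:\|b\|\leqslant 1\}$. The easy parts do work: $|\cdot|$ is submultiplicative since $L_{ab}=L_aL_b$; it is equivalent to $\|\cdot\|$ because, with $(e_n)$ a bounded approximate identity of bound $K$, one has $\|a\|=\lim_n\|ae_n\|\leqslant K|a|$ while $|a|\leqslant\|a\|$ always; and commutativity does let one computation serve for both sides. But none of this yields stability. By the paper's own criterion (Theorem \ref{thm5}), $(A,|\cdot|)$ is stable if and only if $|a|=\sup\{|ab|:|b|\leqslant 1\}$ for every $a$, and it is the inequality $\geqslant$ that fails to follow: testing against the approximate identity only gives $\sup\{|ab|:|b|\leqslant 1\}\geqslant |a|/\liminf_n|e_n|$, and $|e_n|=\|L_{e_n}\|$ need not tend to $1$ — it may stay near $K>1$. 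So the operator-norm renorming merely reproduces the original difficulty one level up: you would now need the approximate identity to have bound $1$ in the new norm, and that is precisely what is not given.

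That missing step is the real content of the proposition, and it is where the paper's proof goes: it invokes Dixon's renorming theorem \cite[theorem 7.3]{D}, which says that a commutative normed algebra with a \emph{sequential} bounded approximate identity admits an equivalent algebra norm in which there is a sequential approximate identity of bound $1$; stability is then immediate from Proposition \ref{prop1} (if $\|ab\|\leqslant M\|b\|$ for all $b$, then $\|ae_n\|\leqslant M\|e_n\|\leqslant M$ and $ae_n\to a$). Notice that your plan uses commutativity only to merge the left and right conditions and never uses sequentiality at all, yet these are exactly the hypotheses under which Dixon's theorem operates; an argument that engages neither is a strong sign the essential obstruction has been bypassed rather than overcome. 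To repair your approach you would have to construct, alongside the norm, a new approximate identity whose elements have new norm at most $1$ — at which point you are reproving Dixon's theorem, not shortcutting it.
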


\begin{proof} By \cite[theorem 7.3]{D}, there is an equivalent norm $|.|$ on $A$ for which
there exists another sequential approximate identity on $A$ with
bound 1. So by proposition \ref{prop1}, $(A, |.|)$ is stable.
\end{proof}

\begin{prop}\label{prop2}
Every C*-algebra $A$ is Stable.
\end{prop}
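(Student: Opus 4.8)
The plan is to exploit the defining C*-identity $\|a^*a\| = \|a\|^2 = \|aa^*\|$, which holds in every C*-algebra regardless of whether it is unital. The crucial idea is that the right element to feed into the stability inequality is the adjoint $a^*$ itself, precisely because $a^*$ is the element whose product with $a$ recovers $\|a\|^2$. I will also use that the involution is isometric, $\|a^*\| = \|a\|$, which follows from the C*-identity since $\|a\|^2 = \|a^*a\| \leqslant \|a^*\|\,\|a\|$ gives $\|a\| \leqslant \|a^*\|$, and applying this to $a^*$ gives the reverse inequality.

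For left stability, suppose $a \in A$ and $M > 0$ satisfy $\|ab\| \leqslant M\|b\|$ for every $b \in A$. I would specialize to $b = a^*$, obtaining $\|aa^*\| \leqslant M\|a^*\|$. Since $\|aa^*\| = \|a\|^2$ and $\|a^*\| = \|a\|$, this reads $\|a\|^2 \leqslant M\|a\|$. If $a = 0$ the bound $\|a\| \leqslant M$ is immediate; otherwise dividing by $\|a\| > 0$ yields $\|a\| \leqslant M$. For right stability the argument is symmetric: assuming $\|ba\| \leqslant M\|b\|$ for all $b$, I again set $b = a^*$, now obtaining $\|a^*a\| \leqslant M\|a^*\|$, that is $\|a\|^2 \leqslant M\|a\|$, and conclude $\|a\| \leqslant M$ exactly as before. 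As $A$ is then both left and right stable, it is stable.

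I do not anticipate any genuine obstacle here; the entire content of the proposition lies in the single substitution $b = a^*$, after which the C*-identity does all the work and no appeal to a unit or approximate identity is needed. The only points requiring a word of care are the isometry $\|a^*\| = \|a\|$ recorded above and the degenerate case $a = 0$, both of which are routine.
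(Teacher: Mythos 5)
Your proof is correct and is essentially the paper's argument made self-contained: the paper simply cites the standard C*-algebra fact $\|a\|=\sup_{\|b\|\leq 1}\|ab\|$, whose usual proof is exactly your substitution $b=a^{*}$ (normalized) together with the C*-identity. Your version is somewhat more careful than the paper's one-liner, since you also treat right stability and the degenerate case $a=0$ explicitly, but the underlying idea is identical.
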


\begin{proof} For $a\in A$ and all $M>0$, if we have $\|ab\|\leq M\|b\|$, since $A$ is C*- algebra
we have $\|a\|=\sup_{\|b\|\leq 1} \|ab\|\leq M$.
\end{proof}
There are of course unital stable normed algebras which are not
C*-algebra.

 \vspace{0.5 cm}
By proposition \ref{prop1} every normed algebra with a bounded approximate identity of
bound 1 is stable, but proposition \ref{prop2} shows that for $ C^*$- algebras, the bound 1 is not necessarily
for $(e_\alpha)_{\alpha \in I}$ making $A$ to a stable normed algebra. Since C*-algebras
may have a bounded approximate identity with bound greater than 1 and, the following theorem shows that
a stable Banach algebra may have no approximate identity.

\vspace{0.25cm}
\begin{thm}\label{thm3}
There are stable Banach algebras with no approximate identity.
\end{thm}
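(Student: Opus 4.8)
The plan is to produce an explicit example rather than argue abstractly. I would take the convolution algebra $A = \ell^1(S)$ of the additive semigroup $S = \{1,2,3,\dots\}$ of strictly positive integers: $A$ is the space of summable sequences $a = (a_n)_{n\geq 1}$ with norm $\|a\|_1 = \sum_{n} |a_n|$ and convolution product $(a*b)_n = \sum_{k=1}^{n-1} a_k\, b_{n-k}$. Writing $\delta_m$ for the sequence that is $1$ in position $m$ and $0$ elsewhere, one has $\delta_m * \delta_n = \delta_{m+n}$. Completeness of $\ell^1$ together with submultiplicativity, $\|a*b\|_1 \leq \|a\|_1\|b\|_1$, makes $A$ a commutative Banach algebra. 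The point of removing $0$ from the index semigroup is essential: including it would make $\delta_0$ a two-sided unit.

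First I would verify stability. The crucial observation is that convolution by $\delta_1$ is simply the right shift, $(a*\delta_1)_n = a_{n-1}$ for $n\geq 2$ and $(a*\delta_1)_1 = 0$, which is an isometry of $\ell^1$; hence $\|a*\delta_1\|_1 = \|a\|_1$ for every $a\in A$. Now suppose $a\in A$ and $M>0$ satisfy $\|a*b\|_1 \leq M\|b\|_1$ for all $b\in A$. Taking $b = \delta_1$ gives $\|a\|_1 = \|a*\delta_1\|_1 \leq M\|\delta_1\|_1 = M$, so $A$ is left stable; since $A$ is commutative, $\|ab\| = \|ba\|$, so it is also right stable, hence stable.

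Next I would show that $A$ has no approximate identity, not even a one-sided one. I would test against $a = \delta_1$. For any $e\in A$ the sequence $e*\delta_1$ is supported in $\{2,3,\dots\}$, because $\operatorname{supp}(e)\subseteq S$ and convolution with $\delta_1$ raises every index by one; in particular its first coordinate is $0$, whence $\|e*\delta_1 - \delta_1\|_1 \geq |0 - 1| = 1$. Therefore no net $(e_\lambda)$ can satisfy $e_\lambda * \delta_1 \to \delta_1$, so $A$ has no left approximate identity, and by commutativity none at all.

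The construction is the whole content of the argument; once it is in hand, both verifications above are routine. The single genuine design choice, and the one I expect to be the real obstacle, is engineering the index semigroup so that two competing demands are met simultaneously: the persistent ``gap'' at coordinate $1$ must block every candidate approximate identity, while the shift furnished by $\delta_1$ must remain an isometry so that the regular representation stays isometric and forces stability. Deleting $0$ from $\mathbb{N}$ is precisely what achieves both at once.
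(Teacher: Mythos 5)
Your proof is correct, and it takes a genuinely different route from the paper. The paper argues abstractly: starting from any unital commutative Banach algebra $B$ containing a non-invertible element $a$ with $\|a\|=1$ and $\inf\{\|ax\|:\|x\|=1\}=1$, it forms $A=aB$ with the transported norm $\|ab\|'=\|b\|$, rules out an approximate identity by noting $\|ae_{\lambda_0}-1\|<1$ would make $ae_{\lambda_0}$ (hence $a$) invertible via the Neumann series, and derives stability of $A$ from stability of the unital algebra $B$; a separate example (the disc algebra with $f_0(z)=z$, via the maximum modulus principle) certifies that such a $B$ exists. You instead exhibit one concrete algebra, $\ell^1$ of the additive semigroup $\{1,2,3,\dots\}$, and verify both properties by hand: stability drops out of the single test element $\delta_1$, whose left multiplication is the isometric shift, and your exclusion of approximate identities is quantitative and stronger than the paper's --- $\|e*\delta_1-\delta_1\|_1 = 1+\|e\|_1 \geq 1$ for every $e\in A$, which kills even one-sided, unbounded approximate identities without any invertibility argument. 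It is worth observing that your example is in fact an instance of the paper's template with $B=\ell^1(\mathbb{N}_0)$ and $a=\delta_1$: the shift is an isometry, so $\inf\{\|\delta_1 * x\|_1:\|x\|_1=1\}=1$, and the transported norm on $\delta_1 * B$ coincides with the $\ell^1$ norm. What each approach buys: the paper's scheme yields a whole family of examples and isolates the abstract hypotheses responsible (though it silently needs $b\mapsto ab$ to be injective for $\|\cdot\|'$ to be well defined, which your isometric shift supplies for free); your construction is self-contained and elementary, avoiding complex analysis, the renorming step, and the Neumann series entirely.
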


\begin{proof}
Let $B$ be any unital commutative Banach algebra, and suppose that $a\in B$,
 with $\|a\|=1$ where $a$ is not an invertible element.
 Moreover suppose $$\inf \{\|ax\|: \|x\|=1\}=1.$$
 
Define $A=a B$. Then $A$ with the norm $\|a b\|~'=\|b\|$ for $b\in B$, is
a commutative Banach algebra. Now we show that $A$ is a stable
 Banach algebra without any approximate identity.

Assume $A$ has an approximate identity $(a e_{\lambda})_{\lambda \in \Lambda}$. Then there exists $\lambda_0$ such that
 $\| a (a{e_{\lambda_{0}}})- a\|~'<1$ and so $\|a {e_{\lambda_{0}}} -1\|<1$.
 Thus $(a{e_{\lambda_{0}}})^{-1}$ exists in $B$ and $a{e_{\lambda_{0}}}(a{e_{\lambda_{0}}})^{-1}=1$.
  It means $a$ has inverse in $B$, which is a contradiction. So $A$ has no
 approximate identities.

 Now we show that $A$ is stable. Suppose $b\in B$ and $M>0$. If we have
 $\|(a b)( a x)\|~' \leq M \| a x\|~'$ for all $x\in B$. Then
 $\|bax \|\leq M \|x\|$. Since $B$ is unital, by corollary \ref{corol1}, it
  is stable. Therefore by definition of stable algebras $\|ba\|\leq M$.
  Now we have
   $$\|b\|=\|b\|.1 \leq \|b\|~\|\frac{b}{\|b\|} ~ a\|\leq M .$$ So $\|a b\|~'=\|b\|\leq M$.
\end{proof}

In the next example, we show that such an algebra $B$ asserted in theorem \ref{thm3}, exists.

 \begin{examp}
Let $D\subseteq \Bbb C$ be the open unit disc and suppose ${\mathcal{A}}(D)$ is the set of all
 continuous functions on $\overline{D}$ which are
holomorphic on $D$. Then ${\mathcal{A}}(D)$ with the usual definitions of operations (pointwise) and the
 uniform norm $\|.\|_{\infty}$, is a unital commutative Banach algebra.

Now assume $f_{0} (z)=z$, then $\|f_{0}\|_{\infty}=1$ and it is a singular element of ${\mathcal{A}}(D)$.
Also by the maximum modulus principle theorem $$\|f_{0} f\|_{\infty} = \sup_{z\in \overline{D}} |z f(z)|= \sup_{z\in \partial D} |z f(z)|=\|f\|_{\infty},$$
for all $f\in {\mathcal{A}}(D)$. This shows that $$\inf\{\|f_{0} f\|_{\infty}: \|f\|_{\infty}=1\}=\{\|f\|_{\infty}: \|f\|_{\infty}=1\}=1.$$
\end{examp}

 \vspace{0.5 cm}
\section{some results on stable normed algebras}\label{section3}

In \cite{A-N}, we have proved some results about almost
multiplier maps on the stable normed algebras and there are many
questions about validity of famous theorems for stable normed
algebras. In this section we present an equivalent relation for
stability and investigate some properties which are valid for
stable normed algebras. We also extend the notion of stable normed
algebras for modules.

 \vspace{0.5 cm}
\begin{thm}\label{thm5}
The normed algebra $A$ is stable if and only if $\|a\|=||L_a\|$,
for all $a\in A$.
\end{thm}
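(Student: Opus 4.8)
The plan is to unwind the definition of the operator norm of the left multiplication map $L_a\colon b\mapsto ab$ and to recognize the stability axiom as exactly the reverse of a universally valid inequality. Since $A$ is a normed algebra we always have $\|ab\|\le\|a\|\,\|b\|$, so $L_a$ is bounded and $\|L_a\|=\sup_{\|b\|\le 1}\|ab\|\le\|a\|$. Thus the asserted equality $\|a\|=\|L_a\|$ is equivalent to the single nontrivial inequality $\|a\|\le\|L_a\|$, and the whole proof reduces to matching this against the defining condition of stability. It is also worth recording at the outset that $\|L_a\|$ is precisely the least constant $M$ for which $\|ab\|\le M\|b\|$ holds for all $b$; this is what makes the two sides line up.

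For the forward implication, I would suppose $A$ is stable and fix $a\in A$. By the definition of the operator norm, $\|ab\|\le\|L_a\|\,\|b\|$ holds for every $b\in A$, so the hypothesis in the definition of stability is met with the constant $M=\|L_a\|$. Stability then yields $\|a\|\le\|L_a\|$, and combining this with the universal bound $\|L_a\|\le\|a\|$ gives $\|a\|=\|L_a\|$ for all $a\in A$.

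For the converse, I would assume $\|a\|=\|L_a\|$ for all $a$ and take $a\in A$, $M>0$ with $\|ab\|\le M\|b\|$ for every $b\in A$. Passing to the supremum over $\|b\|\le 1$ gives $\|L_a\|\le M$, hence $\|a\|=\|L_a\|\le M$, which is exactly the conclusion demanded by the definition of stability. This closes the equivalence.

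The analysis here is routine; the point that needs care is bookkeeping rather than estimation. The only trivial case to dispatch is $b=0$ (and, separately, $\|L_a\|=0$, which forces $\|a\|=0$ through the same mechanism that gives faithfulness). The more substantive caveat, which I would state explicitly, is that $L_a$ encodes only \emph{left} multiplication, so the argument above characterizes left stability; the symmetric identity $\|a\|=\|R_a\|$ for the right multiplication operator $R_a\colon b\mapsto ba$ characterizes right stability by identical reasoning, and the two conditions together yield the full two-sided conclusion. Making this left/right correspondence precise is the step I expect to require the most care.
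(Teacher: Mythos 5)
Your proof is correct and follows essentially the same route as the paper's: both directions amount to recognizing $\|L_a\|$ as the least admissible constant $M$ in the stability condition, with the universal bound $\|L_a\|\le\|a\|$ supplying the remaining inequality in the forward direction and the supremum over $\|b\|\le 1$ handling the converse. Your explicit caveats --- dispatching the $\|L_a\|=0$ edge case (since the definition requires $M>0$) and noting that $\|a\|=\|L_a\|$ alone only characterizes \emph{left} stability, so the paper's two-sided notion strictly needs the companion identity $\|a\|=\|R_a\|$ --- are careful points that the paper's proof glosses over.
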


\begin{proof} Suppose $A$ is a stable normed algebra. By the
definition if, for all $a\in A$ and $M>0$; if we have
$\|ab\|\leqslant M \|b\|~$ for all $b\in A$, then we can conclude
$\|a\|\leqslant M$. So if for all $a\in A$ and $M>0$; if we have
$\|L_a\|\leqslant M$ then $\|a\|\leqslant M$. It means
$\|a\|\leqslant \|L_a\|$ and so $\|a\|=\|L_a\|$.

Now suppose $\|a\|=\|L_a\|$, for all $a\in A$ if for all $a\in A$ and
$M>0$; if we have $\|ab\|\leqslant M \|b\|~$ for all $b\in A$, then by taking
 spermium we have $\|a\|=\|L_a\|\leq M$.
\end{proof}

\begin{thm}
Suppose that $A$ is a stable Banach algebra then $A$ is isometrically
isomorphic with a subalgebra of $B(A)$.
\end{thm}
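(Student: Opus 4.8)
The plan is to exhibit the left regular representation $a \mapsto L_a$ as the desired isometric isomorphism, where $L_a \colon A \to A$ is the left multiplication operator defined by $L_a(b) = ab$. First I would check that each $L_a$ is a well-defined bounded linear operator on $A$. Linearity is immediate from the distributive law, and boundedness follows from submultiplicativity of the algebra norm, since $\|L_a(b)\| = \|ab\| \leqslant \|a\|\,\|b\|$ for all $b\in A$; hence $L_a \in B(A)$ with $\|L_a\| \leqslant \|a\|$.

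Next I would verify that the map $\Phi \colon A \to B(A)$ given by $\Phi(a) = L_a$ is an algebra homomorphism. Linearity of $\Phi$ is clear from the definition, and for multiplicativity one computes $L_{ab}(c) = (ab)c = a(bc) = L_a(L_b(c))$ for every $c \in A$, so that $L_{ab} = L_a L_b$, i.e. $\Phi(ab) = \Phi(a)\Phi(b)$. Consequently $\Phi$ is a homomorphism of algebras, and its range $\Phi(A) = \{L_a : a \in A\}$ is a subalgebra of $B(A)$.

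The heart of the matter, namely that $\Phi$ is an isometry, is precisely the content of Theorem \ref{thm5}: stability of $A$ is equivalent to $\|a\| = \|L_a\|$ for every $a \in A$. Therefore $\|\Phi(a)\| = \|a\|$ for all $a$, so $\Phi$ preserves norms. An isometric homomorphism is automatically injective, since $\Phi(a) = 0$ forces $\|a\| = \|L_a\| = 0$ and hence $a = 0$; thus $\Phi$ is an isometric isomorphism of $A$ onto the subalgebra $\Phi(A)$ of $B(A)$.

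Because the substantive inequality $\|a\| \leqslant \|L_a\|$ has already been secured in Theorem \ref{thm5}, I do not expect any genuine obstacle in the argument. The one point meriting care is the role of the Banach hypothesis: completeness of $A$ guarantees that $B(A)$ is itself a Banach algebra, so that the phrase \emph{subalgebra of $B(A)$} is meaningful in the stated setting, while the isometric character of $\Phi$ rests entirely on the equivalence established in Theorem \ref{thm5}.
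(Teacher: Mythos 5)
Your proposal is correct and follows essentially the same route as the paper: the left regular representation $\Phi(a)=L_a$, multiplicativity checked pointwise, and the isometry supplied by Theorem \ref{thm5}. The only divergence is that the paper additionally verifies that $\{L_a : a\in A\}$ is closed in $B(A)$ via a Cauchy-sequence argument, a step your version renders unnecessary, since the statement asks only for a subalgebra and, in any case, the isometric image of the complete space $A$ is automatically closed.
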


\begin{proof}
 Let $A$ be a stable normed algebra and $B(A)$ be the
Banach algebra of all bounded linear maps on $A$. Since $A$ is
faithful the map $\phi: A\longrightarrow B(A),\quad a\to L_a$, is
the natural embedding.

In the following, we can see that $\phi(A)$ is a closed
subalgebra of $\mathcal{B}(A)$, in norm topology. Suppose
$L_{a_n} \rightarrow T$ in $\phi(A)$, therefore $( L_{a_n})$ is
a Cauchy sequence. So for any arbitrary $\varepsilon>0$ and $b\in
A$ with norm less than 1, $\|a_n b- a_m b\|\leq \varepsilon$, for
some $N>0$ and every $m>n>N$. Since $A$ is stable, we have $\|a_n
-a_m \|\leq \varepsilon$. It means that there is an $a\in A$, such
that $a_n \rightarrow a$ and therefore $T=L_a$. Finally by theorem \ref{thm5},
$\phi$ is an isometry.
\end{proof}

\begin{prop}
Let $A$ be a stable normed algebra then its completion is stable
too.
\end{prop}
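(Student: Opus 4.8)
The plan is to exploit that the completion $\widetilde A$ of $A$ is again a normed (indeed Banach) algebra in which $A$ sits as a dense subalgebra with the same norm, and then to verify the defining inequality of stability on $\widetilde A$ by approximating a general element of $\widetilde A$ by elements of $A$ and transferring the hypothesis to $A$, where stability is available. First I would record the standard fact that the multiplication of $A$ extends uniquely to a continuous bilinear map on $\widetilde A$ (using $\|xy-x'y'\|\le\|x\|\,\|y-y'\|+\|x-x'\|\,\|y'\|$ on Cauchy sequences), that this extension is associative and submultiplicative by passage to the limit, and hence that $\widetilde A$ is a Banach algebra containing $A$ isometrically as a dense subalgebra. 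I would then prove left stability of $\widetilde A$; right stability is entirely symmetric, and together they give that $\widetilde A$ is stable.

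For the core step, let $x\in\widetilde A$ and $M>0$ satisfy $\|xy\|\le M\|y\|$ for all $y\in\widetilde A$. In particular $\|xb\|\le M\|b\|$ holds for every $b\in A$, since $A\subseteq\widetilde A$. Choose a sequence $(a_n)$ in $A$ with $a_n\to x$. Then for each $n$ and every $b\in A$ the triangle inequality gives
\[
\|a_n b\|\le\|(a_n-x)b\|+\|xb\|\le\bigl(\|a_n-x\|+M\bigr)\|b\|.
\]
Because $A$ is (left) stable, applying its defining implication to $a_n$ with the constant $M_n:=\|a_n-x\|+M$ yields $\|a_n\|\le M_n=\|a_n-x\|+M$. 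Letting $n\to\infty$ and using $a_n\to x$ (so $\|a_n\|\to\|x\|$ and $\|a_n-x\|\to0$) gives $\|x\|\le M$, which is exactly left stability of $\widetilde A$. The mirror computation with $\|ba\|$ in place of $\|ab\|$, invoking right stability of $A$, yields right stability of $\widetilde A$.

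The point worth emphasizing is that the hypothesis on $\widetilde A$ only needs to be tested against the dense subalgebra $A$: restricting $b$ to $A$ costs nothing, and it is precisely for such $b$ that the stability of $A$ can be applied. Thus no separate density argument for unit balls is required, and the estimate above does all the work. The main obstacle is therefore not the inequality itself but the preliminary verification that $\widetilde A$ is genuinely a normed algebra, i.e. that the extended multiplication remains associative and submultiplicative, which must be in place before the approximation estimate can be invoked; once this routine extension is secured, the transfer of stability from $A$ to $\widetilde A$ is immediate. One may alternatively phrase the argument through Theorem \ref{thm5}, showing $\|x\|=\|L_x\|$ on $\widetilde A$, but the direct estimate above is self-contained and avoids having to identify operator norms on $A$ with those on $\widetilde A$.
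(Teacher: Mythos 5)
Your proof is correct, but it takes a genuinely different route from the paper's. The paper deduces the proposition from its Theorem \ref{thm5} (stability is equivalent to $\|a\|=\|L_a\|$ for all $a$): given $\tilde a\in\widetilde A$ it picks $a_n\to\tilde a$ in $A$, notes $\|L_{a_n}\|=\|a_n\|\to\|\tilde a\|$, and then claims $L_{a_n}\to L_{\tilde a}$ in $B(\widetilde A)$ via the identification $B(\widetilde A)=\widetilde{B(A)}$, concluding $\|\tilde a\|=\|L_{\tilde a}\|$. That identification is the weakest link of the paper's argument: as stated it is not literally true (not every bounded operator on $\widetilde A$ arises as a limit of operators on $A$), and what is actually needed there --- that the operator norm of $L_a$ computed over $A$ equals its norm over $\widetilde A$, by density of $A$ in $\widetilde A$, together with $\|L_{a_n}-L_{\tilde a}\|\le\|a_n-\tilde a\|\to 0$ --- must be extracted separately. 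You sidestep all of this by verifying the definition of stability on $\widetilde A$ directly: the hypothesis $\|xy\|\le M\|y\|$ need only be tested against $b\in A$, and the triangle-inequality perturbation lets you apply stability of $A$ to each approximant $a_n$ with the constant $M_n=M+\|a_n-x\|>0$, after which $n\to\infty$ gives $\|x\|\le M$. Your argument is self-contained, treats left and right stability symmetrically (the paper's proof, phrased through $L_a$, really only addresses the left version as written), and correctly flags the one genuine prerequisite, namely that multiplication extends continuously to $\widetilde A$ so that $\widetilde A$ is a normed algebra at all; the paper's route buys brevity given Theorem \ref{thm5} and keeps the multiplier viewpoint $a\mapsto L_a$ that the neighboring embedding theorem exploits, but at the cost of the shaky operator-algebra identification that your direct estimate avoids.
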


\begin{proof}
By theorem \ref{thm5}, it is enough to show that
$\|\tilde{a}\|=\|L_{\tilde{a}}\|$, for all $\tilde{a}\in
\tilde{A}$. Since $A$ is stable we have $\|a\|=\|L_a\|$, for all
$a\in A$. Let $\tilde{a}\in \tilde{A}$, so there is a sequence
$(a_n)\in A$ such that $a_n\rightarrow \tilde{a}$. Therefore
$\|L_{a_n}\|=\|a_n \| \rightarrow \| \tilde{a} \|$. Moreover,
since $B(\tilde{A})=\widetilde{B(A)}$, $L_{a_n}\rightarrow
L_{\tilde{a}}$ in $B(\tilde{A})$. So $\|L_{a_n}\|\rightarrow
\|L_{\tilde{a}}\|$, it means $\|\tilde{a}\|=\|L_{\tilde{a}}\|$.
\end{proof}

Now we extend the notion of stability on modules.

\begin{defn}
Let $A$ be a normed algebra and $X$ be a left(resp. right) normed $A$-module.
We say $A$ is left(resp. right) stable on $X$ if
for all $a\in A$ and $M>0$; if we have
$\|ax\|\leqslant M \|x\|~$ $( resp. ~\|xa\|\leqslant M \|x\|~)$
for all $x\in X$, then we can conclude $\|a\|\leqslant M$.
\end{defn}

\begin{prop}
Let $A$ be a stable normed algebra. Then

{\rm i)}
A is stable on every ideal $I$ of $A$.

{\rm ii)} If $A$ is unital, then $A$ is stable on $A^*$.

\end{prop}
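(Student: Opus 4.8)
The plan is to reduce both parts to the defining property of stability for $A$ itself, together with the characterization $\|a\|=\|L_a\|$ of Theorem \ref{thm5}; in the unital case this characterization also pins down the norm of the identity, which is what drives part (ii).

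For part (ii), the first step I would take is to record that, since $A$ is unital and stable, $L_1$ is the identity operator on $A$, so $\|L_1\|=1$ and Theorem \ref{thm5} gives $\|1\|=\|L_1\|=1$. Next I would regard $A^*$ as the usual dual $A$-bimodule, with $(a\cdot f)(x)=f(xa)$ and $(f\cdot a)(x)=f(ax)$ for $a,x\in A$ and $f\in A^*$. To prove left stability on $A^*$, suppose $\|a\cdot f\|\le M\|f\|$ for all $f\in A^*$. Evaluating at the identity gives $(a\cdot f)(1)=f(a)$, whence $|f(a)|\le\|a\cdot f\|\,\|1\|\le M\|f\|$ for every $f$; taking the supremum over $\|f\|\le1$ and invoking the Hahn--Banach theorem yields $\|a\|=\sup_{\|f\|\le1}|f(a)|\le M$. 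The right-stable case is identical, using $(f\cdot a)(1)=f(a)$. I expect no essential difficulty here once the module conventions are fixed.

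For part (i), let $I$ be a two-sided ideal; I would treat left stability on $I$, the right case being symmetric. Given $a\in A$ and $M>0$ with $\|ax\|\le M\|x\|$ for all $x\in I$, the target is $\|a\|\le M$, and by stability of $A$ it is enough to establish $\|ab\|\le M\|b\|$ for every $b\in A$. The device I would use is the inclusion $AI\subseteq I$: for $b\in A$ and $x\in I$ one has $bx\in I$, so applying the hypothesis to $bx$ gives $\|(ab)x\|=\|a(bx)\|\le M\|bx\|\le M\|b\|\,\|x\|$, i.e. each product $ab$ satisfies the stability inequality on $I$ with constant $M\|b\|$. The hard part — and the crux of the whole proposition — will be the transfer from this control of $a$ (and of $ab$) on the ideal $I$ to control on all of $A$: the hypothesis only detects $a$ through its action on $I$, whereas stability of $A$ concerns its action on the entire algebra. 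Closing this gap is exactly where the ideal structure of $I$ must combine with the stability of $A$ to recover the supremum defining $\|L_a\|$ from the action on $I$, forcing $\|ab\|\le M\|b\|$ for all $b$ and hence $\|a\|\le M$; this transfer is the step I would watch most carefully.
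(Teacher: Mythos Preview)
For part~(ii) your argument is correct and coincides with the paper's: the paper likewise chooses, via Hahn--Banach, a functional $f\in A^{*}$ with $|f(a)|=\|a\|$ and $\|f\|\le 1$, and then evaluates the module action at the identity to get $\|a\|=|f(a)|=|f(a\cdot 1)|\le\|fa\|\le M\|f\|\le M$. Your explicit verification that $\|1\|=1$ from stability (via Theorem~\ref{thm5}) is a legitimate refinement of a step the paper takes for granted.

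For part~(i) the paper gives no proof at all, and the ``transfer'' step you single out as the crux is not merely delicate---it is impossible in general, because the statement is false as written. Take $A=\mathbb{C}^{2}$ with coordinatewise multiplication and the supremum norm; this is a unital $C^{*}$-algebra, hence stable by Corollary~\ref{corol1} or Proposition~\ref{prop2}. Let $I=\mathbb{C}\times\{0\}$, a two-sided ideal, and $a=(1,2)$. For every $(x,0)\in I$ one has $a\cdot(x,0)=(x,0)$, so $\|a\,i\|\le 1\cdot\|i\|$ for all $i\in I$, yet $\|a\|=2>1$. Thus $A$ is not stable on $I$. Your instinct to flag the passage from control on $I$ to control on all of $A$ was exactly right: that passage fails without an additional hypothesis on $I$ (density of $I$ in $A$, for instance, would suffice by continuity of multiplication together with stability of $A$).
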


\begin{proof}
(ii) Suppose for $a\in A$ and $M>0$, we have $\|fa\|\leq M \|f\|$, for all $f\in A^*$.
 By the Hahn Banach theorem, there exists $f\in A^*$ such that $|f(a)|=\|a\|$ and $\|f\|\leq 1$. So
 for this $f$ we have $$\|a\|=|f(a)|=|f(a.1)|\leq \|fa\|\leq M \|f\|\leq M.$$
\end{proof}

\begin{thm}
Let $A$ be a normed algebra and $X$ be a left normed $A$-module. If $A$ is stable on $X$ then

{\rm i)} If for $a\in A$ we define $\varphi_a:X\rightarrow X$ by $\varphi_a (x)=ax$, then $\varphi_a \in B(X)$
and $\|a\|\leq ||\varphi_a\|\leq k \|a\|$, where $k$ is the constant module coefficient.

{\rm ii)} The completion $\tilde{A}$ of $A$ is stable on $X$.

\end{thm}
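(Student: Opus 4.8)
The plan is to handle the two assertions separately, deriving (i) directly from the definitions of the operator norm and of stability on $X$, and then bootstrapping (ii) from a limiting argument built on the module bound.

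For part (i), I would first check that $\varphi_a$ is a bounded linear operator. Linearity in $x$ is immediate from the module axioms. For boundedness, the module bound gives $\|\varphi_a(x)\|=\|ax\|\leq k\|a\|\,\|x\|$ for every $x\in X$, so $\varphi_a\in B(X)$ and $\|\varphi_a\|\leq k\|a\|$; this is the right-hand inequality. For the left-hand inequality, set $M=\|\varphi_a\|$. By the very definition of the operator norm, $\|ax\|=\|\varphi_a(x)\|\leq M\|x\|$ for all $x\in X$, and since $A$ is stable on $X$ this forces $\|a\|\leq M=\|\varphi_a\|$. Combining the two bounds yields $\|a\|\leq\|\varphi_a\|\leq k\|a\|$.

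For part (ii), the first task is to make sense of the action of $\tilde A$ on $X$. Since the action is bounded, $\|(a-a')x\|\leq k\|a-a'\|\,\|x\|$, so any Cauchy sequence $(a_n)$ in $A$ converging to $\tilde a\in\tilde A$ produces a Cauchy sequence $(a_n x)$ in $X$ for each fixed $x$; defining $\tilde a x=\lim_n a_n x$ extends the action to $\tilde A$ and preserves the bound $\|\tilde a x\|\leq k\|\tilde a\|\,\|x\|$, so $X$ is a normed $\tilde A$-module. Now suppose $\tilde a\in\tilde A$ and $M>0$ satisfy $\|\tilde a x\|\leq M\|x\|$ for all $x\in X$, and pick $a_n\to\tilde a$. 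The key estimate is
$$\|a_n x\|\leq\|(a_n-\tilde a)x\|+\|\tilde a x\|\leq\big(k\|a_n-\tilde a\|+M\big)\|x\|,$$
valid for every $x\in X$. Applying the stability of $A$ on $X$ to each $a_n$ gives $\|a_n\|\leq k\|a_n-\tilde a\|+M$, and letting $n\to\infty$ (so that $\|a_n\|\to\|\tilde a\|$ while $\|a_n-\tilde a\|\to0$) yields $\|\tilde a\|\leq M$, which is exactly stability of $\tilde A$ on $X$.

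I expect the only genuinely delicate point to be the well-definedness of the extended action in part (ii): the limit $\lim_n a_n x$ lives in $X$ only because (or provided that) $X$ is complete, and one must verify that this limit is independent of the approximating sequence $(a_n)$. Everything else is a routine combination of the operator-norm definition, the module coefficient $k$, and the hypothesis that $A$ is stable on $X$.
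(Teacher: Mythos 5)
The paper states this theorem with no proof at all --- it is the last result of Section \ref{section3}, followed immediately by the list of open questions --- so there is nothing to compare against; judged on its own, your proposal is correct and is surely the intended argument. Part (i) is the module analogue of the paper's characterization of stability via $\|a\|=\|L_a\|$ (Theorem \ref{thm5}), and your proof follows that template exactly: the module bound gives $\varphi_a\in B(X)$ with $\|\varphi_a\|\leq k\|a\|$, and stability on $X$ applied with $M=\|\varphi_a\|$ gives the lower bound. One pedantic point: the definition of stability quantifies over $M>0$, so when $\|\varphi_a\|=0$ you should apply stability with $M=\varepsilon$ for each $\varepsilon>0$ and let $\varepsilon\to 0$ (the paper glosses over the same point in its own proof of Theorem \ref{thm5}, so this is cosmetic).

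For part (ii), your limiting argument is sound and is in fact cleaner than the paper's proof of the analogous algebra statement (that the completion of a stable algebra is stable), which leans on the questionable identification $B(\tilde{A})=\widetilde{B(A)}$; your direct estimate $\|a_n\|\leq k\|a_n-\tilde{a}\|+M$ followed by $n\to\infty$, using continuity of the norm so that $\|a_n\|\to\|\tilde{a}\|$, avoids any such identification. The one genuine issue --- which you correctly isolate yourself --- is that the statement presupposes an action of $\tilde{A}$ on $X$, and your construction $\tilde{a}x=\lim_n a_n x$ requires $X$ to be complete. If $X$ is merely normed, the limit lives in the completion $\tilde{X}$, and the identical argument proves that $\tilde{A}$ is stable on $\tilde{X}$, which yields the stated conclusion since the relevant norms $\|\tilde{a}x\|$ are unchanged; well-definedness independent of the approximating sequence follows, as you note, from $\|(a_n-a_n')x\|\leq k\|a_n-a_n'\|\,\|x\|$. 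With that completeness hypothesis made explicit --- a gap in the paper's statement rather than in your reasoning --- the proposal is complete.
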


\section{a list of open questions}\label{section4}
Here we give some questions which we believe they are open.

\begin{enumerate}
\item If $(A,\|.\|)$ is stable and $p\in En(A)$. Is $(A,p)$ stable?

\item Suppose $A$ is a stable normed algebra. Is the second dual
 $A^{**}$ of $A$ (with the first or second Arens product) stable?

\item If $N$ is a bi-ideal of stable normed algebra $A$, is $\frac{A}{N}$ stable?

 As is well known, the quotient algebra of a $C^*$-algebra is a $C^*$-algebra. But to
 prove this matter the fact that every $C^*$-algebra has an approximate identity has been used
 and therefore for a general stable normed algebra we think the answer of this question is negative.

\item If $I$ is an ideal of a stable algebra, is $I$ itself a stable normed algebra?

\end{enumerate}


\begin{thebibliography}{9}


\bibitem{A-N} E. Ansari-Piri and S. Nouri, \textit{Almost multipliers and some of their properties},
Submitted.
\bibitem{Bir} F. Birtal,  \textit{Isomorphism and isometric multipliers}, Proc. Amer.
Math. Soc. (1962), no. 13, 204-210.
\bibitem{Bur} R.B. Burchel,  \textit{An introduction to classical complex analysis}, 1 , Acad. Press (1979).
\bibitem{B-D} F.F. Bonsall, J. Duncan, \textit{Complete Normed Algebras}, Springer-Verlag, Berlin
Heidelberg New Yourk, 1973.
\bibitem{D}  P. G. Dixon,  \textit{Approximate identity in normed algebras}, Proc. London Math. Soc. (3) 26 (1973)
485-492.
\bibitem{D-W} R.S. Doran and J. Wichmann, \textit{Approximate identities and factorization in
Banach modules}, Lecture note in mathematics, Springer-Verlag, Berlin
Heidelberg New Yourk, 1979.
\bibitem{W} J. Wang, \textit{ Multipliers of commutative Banach algebras}, Pacific
J. Math \textbf{11}(1961),1131-1149.

\end{thebibliography}
\end{document}